\newfont{\bb}{msbm10 at 12pt}
\def\r{\hbox{\bb R}}
\def\t{\hbox{\bb T}}
\def\ii{\hbox{\bb I}}
\def\c{\hbox{\cal C}}
\def\s{\hbox{\bb S}}
\def\pt{\frac{\partial\ }{\partial t}}
\def\amb{\mathcal{M}}
\newcommand{\norm}[1]{\left\Vert #1 \right\Vert}
\newcommand{\abs}[1]{\left\vert #1 \right\vert}
\newcommand{\set}[1]{\left\{#1\right\}}
\newcommand{\eps}{\varepsilon}
\numberwithin{equation} {section}
\begin{document}

\theoremstyle{plain}\newtheorem{lemma}{Lemma}[section]
\theoremstyle{plain}\newtheorem{definition}{Definition}[section]
\theoremstyle{plain}\newtheorem{theorem}{Theorem}[section]
\theoremstyle{plain}\newtheorem{proposition}{Proposition}[section]
\theoremstyle{plain}\newtheorem{remark}{Remark}[section]
\theoremstyle{plain}\newtheorem{corollary}{Corollary}[section]

\begin{center}
\rule{15cm}{1.5pt} \vspace{.6cm}

{\Large \bf Rigidity of stable cylinders in three-manifolds} \vspace{0.4cm}

\vspace{0.5cm}

{\large Jos$\acute{\text{e}}$ M. Espinar$\,^\ast$} \\
\vspace{0.3cm} \rule{15cm}{1.5pt}
\end{center}

\vspace{.5cm}

\noindent $\mbox{}^{\ast}$  Departamento de Geometría y Topología, Universidad de Granada, 18071 Granada, Spain; \\
e-mail: jespinar@ugr.es\vspace{.3cm}

\begin{abstract}
In this paper we show how the existence of a certain stable cylinder determines (locally) the ambient manifold where it is immersed. This cylinder has to verify a {\it bifurcation phenomena}, we make this explicit in the introduction. In particular, the existence of such a stable cylinder implies that the ambient manifold has infinite volume. 
\end{abstract}

\vspace{.3cm}

{\bf 2000 Mathematics Subject Classification:} Primary 53A10; Secondary 49Q05, 53C42

\vspace{.2cm}

{\bf Keywords:} Stable surfaces, Bifurcation.

\section{Introduction}

A stable compact domain $\Sigma$ on a minimal surface in a Riemannian three-manifold
$\amb$, is one whose area can not be decreased up to second order by a variation of
the domain leaving the boundary fixed. Stable oriented domains $\Sigma$ are
characterized by the \emph{stability inequality} for normal variations $\psi N$
\cite{SY}

$$ \int _{\Sigma} \psi ^2 |A|^2 + \int _{\Sigma} \psi ^2 {\rm Ric}_{\amb} (N,N)
\leq \int_{\Sigma} |\nabla \psi|^2  $$for all compactly supported functions $\psi
\in H^{1,2}_0 (\Sigma)$. Here $|A|^2$ denotes the the square of the length of the
second fundamental form of $\Sigma$, ${\rm Ric}_{\amb} (N,N)$ is the Ricci
curvature of $\amb$ in the direction of the normal $N$ to $\Sigma$ and
$\nabla $ is the gradient w.r.t. the induced metric.

One writes the stability inequality in the form
$$ \left.\frac{d^2}{dt^2}\right\vert _{t=0}{\rm Area}(\Sigma (t))=
- \int _{\Sigma} \psi L \psi \geq 0 ,$$where $L$ is the linearized operator of the
mean curvature
$$ L = \Delta + |A|^2 + {\rm Ric}_{\amb} .$$

In terms of $L$, stability means that $-L$ is nonnegative, i.e., all its eigenvalues
are non-negative. $\Sigma$ is said to have finite index if $-L$ has only finitely
many negative eigenvalues.

From the Gauss Equation, one can write the stability operator as $L = \Delta - K + V$, where $\Delta$ and $K$ are the Laplacian and Gauss curvature associated to the metric $g$ respectively, and $V := 1/2|A|^2 + S$, where $S$ denotes the scalar curvature associated to the metric $g$.

The index form of these kind of operators is
\begin{equation*}
I(f)=\int _{\Sigma } \left\{ \norm{\nabla f}^2 - V f^2+  K f^2 \right\}
\end{equation*}where $\nabla $ and $\| \cdot \|$ are the gradient and norm associated
to the metric $g$. Thus, if $\Sigma$ is stable, we have
$$\int_{\Sigma} f L f = -I(f) \leq 0 ,$$or equivalently
\begin{equation}\label{varcharac}
\int _ \Sigma f ^2 (1/2 |A|^2 + S) \leq \int _{\Sigma} \set{\norm{\nabla f}^2 + K f ^2} .
\end{equation}

In a seminar paper \cite{FCS}, D. Fischer-Colbrie and R. Schoen proved: 

\begin{quote}
{\bf Theorem A:} \emph{Let $\amb$ be a complete oriented three-manifold of non-negative scalar curvature. Let $\Sigma$ be an oriented complete stable minimal surface in $\amb$. If $\Sigma$ is noncompact, conformally equivalent to the cylinder and the absolute total curvature of $\Sigma$ is finite, then $\Sigma$ is flat and totally geodesic.}
\end{quote}

And they state \cite[Remark 2]{FCS}: \emph{We feel that the assumption of finite
total curvature should not be essential in proving that the cylinder is flat and
totally geodesic.}

Recently, this question was partially answered in \cite{ER} under the assumption that the positive part of the Gaussian curvature is integrable, i.e. $K^+:={\rm max}\set{0,K}\in L^1 (\Sigma)$, and totally answered by M. Reiris \cite{Re}, he proved:

\begin{quote}
{\bf Theorem B:} \emph{Let $\amb$ be a complete oriented three-manifold of non-negative scalar curvature. Let $\Sigma$ be an oriented complete stable minimal surface in $\amb$ diffeomorphic to the cylinder, then $\Sigma$ is flat and totally geodesic.}
\end{quote}

Besides, Bray, Brendle and Neves \cite{BBN} were able of determining the structure of a three-manifold $\amb$ under the assumption of the existence of an area minimizing two-sphere. Specifically, they proved:

\begin{quote}
{\bf Theorem C:} {\it Let $\amb$ be a compact three-manifold with $\pi _2 ( \amb ) \neq 0$. Denote by $\mathcal{F}$ the set of all smooth maps $f: \s ^2 \to \amb$ which represent a non-trivial element of $\pi _2 (\amb)$. Set
$$ \mathcal{A}(\amb) := {\rm inf}\set{{\rm area}(f(\s^2))\, : \, \, f \in \mathcal{F}} .$$\newline
Then, 
$$ \mathcal{A}(\amb)  {\rm inf}_{\amb} R \leq 8 \pi ,$$where $R$ denotes the scalar curvature of $\amb$. Moreover, if the equality holds, then the universal cover of $\amb$ is isometric to the standard cylinder $\s ^2 \times \r$ up to scaling.}
\end{quote}

In this paper, we will go further. We will see how the existence of a stable cylinder verifying a {\it bifurcation phenomena} determines the ambient manifold $\amb$. First, let us make clear what we mean by {\it bifurcation phenomena}: 

\begin{definition}\label{def1}
We say that a complete minimal surface $\Sigma \subset \amb $ bifurcates if there exist $\delta > 0$ and a smooth map $ u : \Sigma \times (-\delta , \delta) \to \r  $ so that 
\begin{itemize}
\item For each $p \in \Sigma$, we have $u(x, 0)=0$ and $\pt |_{t=0}u(p,t) =1$. Moreover, $u(p,t) \geq 0$ if $t > 0$ and $u(p,t) \leq 0$ if $t <0$.
\item For each $t \in (-\delta, \delta)$, the surface 
$$ \Sigma _t  := \set{ {\rm exp}_p (u(p,t) N (p)) \, : \, \, p \in \Sigma },$$is a complete minimal surface. Here, ${\rm exp}$ denotes the exponential map in $\amb$.
%\item $\lam = \set{\Sigma _t }_{t \in (-\delta , \delta)}$ is a codimension one weak minimal lamination. 
\end{itemize}
\end{definition}

Now, we can state:

\begin{theorem}\label{t2}
Let $\amb $ be a complete oriented Riemannian three-manifold with nonnegative scalar curvature. Assume there exists $\Sigma \subset \amb $ a complete stable minimal surface conformally equivalent to a cylinder that bifurcates. Then, $\Sigma$ is flat, totally geodesic and $S$ vanishes along $\Sigma$. Moreover, there exists an open set $\mathcal U \subset \amb$  so that $\mathcal U$  is locally isometric to $\c \times (-\delta , \delta)$, where $\c $ denotes the standard cylinder $\s^1 \times \r$. Also, if any complete stable cylinder in $\amb$ bifurcate for an uniform $\delta >0$, then $\amb$ is locally isometric either to $\s^1 \times \r ^2$ or $\t^2 \times \r$ (here $\t^2$ is the flat tori).
\end{theorem}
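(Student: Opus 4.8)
The plan is to combine the Fischer-Colbrie--Schoen stability machinery with the bifurcation hypothesis, the point being that a bifurcating family produces a nontrivial Jacobi field which forces the stability inequality to be an equality. First I would recall that by Theorem B (Reiris) the hypotheses already force $\Sigma$ to be flat and totally geodesic; the content to add is the information about $S$ and the ambient structure. Since $\Sigma$ is flat, totally geodesic, and $\amb$ has nonnegative scalar curvature, the Gauss equation $S = \tfrac12(R_{\amb} - |A|^2) + K = \tfrac12 R_{\amb} \ge 0$ together with $K\equiv 0$, $|A|\equiv 0$ shows $S = \tfrac12 R_\amb \ge 0$ along $\Sigma$. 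So I must produce a positive Jacobi function that is bounded, then invoke the equality case.

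\medskip
\noindent\textbf{Step 1: The bifurcation gives a Jacobi field.} Differentiate the minimality of $\Sigma_t$ at $t=0$. Writing $\phi(p) := \pt|_{t=0} u(p,t)$, the normal component of the variation is $\phi N$, and since each $\Sigma_t$ is minimal, $\phi$ satisfies $L\phi = 0$, i.e. $\Delta \phi + (|A|^2 + \mathrm{Ric}_\amb(N,N))\phi = 0$ on $\Sigma$. By Definition 1.1, $\phi \equiv 1 > 0$; more importantly the one-sided condition $u(p,t)\ge 0$ for $t>0$ forces the family to lie on one side, which rules out oscillation. Actually the cleanest route: $\phi = 1$ is itself a positive solution of $L\phi = 0$, hence $|A|^2 + \mathrm{Ric}_\amb(N,N) \equiv 0$ on $\Sigma$. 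Combined with nonnegative scalar curvature and the Gauss equation this already recovers $|A|\equiv 0$ and pins down $\mathrm{Ric}_\amb(N,N) = 0$ and $S = \tfrac12 R_\amb$; I then need $R_\amb = 0$ along $\Sigma$, which follows because $S \ge 0$ and, integrating the (now equality) stability relation $\int_\Sigma f^2 S \le \int_\Sigma \|\nabla f\|^2$ against a logarithmic cutoff exhausting the parabolic cylinder, one forces $S \equiv 0$. Hence $S$ vanishes along $\Sigma$ and $R_\amb = 0$ there.

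\medskip
\noindent\textbf{Step 2: From the Jacobi field to a product neighborhood.} Because $\Sigma$ is totally geodesic with trivial normal Jacobi operator ($L = \Delta$ now), the normal exponential map $F(p,t) = \exp_p(tN(p))$ has no focal points for $|t|<\delta$ and its differential is an isometry in the $t$-direction; I would compute that the pulled-back metric is $g_\Sigma + dt^2$, i.e. $\uu := F(\Sigma \times (-\delta,\delta))$ is isometric to $\Sigma \times (-\delta,\delta)$ with the product metric. Since $\Sigma$ is flat and conformally a cylinder, it is intrinsically isometric to either $\s^1\times\r$ or a flat quotient thereof; in the stated local form $\uu$ is locally isometric to $\c \times(-\delta,\delta)$ with $\c = \s^1\times\r$. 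The flatness of $\uu$ also gives that $\amb$ is flat along $\uu$, consistent with $R_\amb = 0$ there.

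\medskip
\noindent\textbf{Step 3: Uniform bifurcation globalizes.} Suppose now every complete stable cylinder in $\amb$ bifurcates for a common $\delta>0$. Starting from one such $\Sigma = \Sigma_0$, each leaf $\Sigma_t$ of its bifurcation is again a complete stable minimal cylinder (stability is inherited because the foliation is by minimal leaves and the Jacobi field $\phi$ is positive, by the Fischer-Colbrie--Schoen criterion), hence bifurcates for the same $\delta$; iterating, the leaves sweep out a complete flat product region $\Sigma \times \r \hookrightarrow \amb$ which is open and closed, so it is all of $\amb$ (after passing to the universal cover / using completeness). Thus $\amb$ is locally isometric to $\Sigma_{\mathrm{univ}}\times\r$, and since $\Sigma_{\mathrm{univ}}$ is a complete flat surface that is a cylinder or a torus quotient, $\amb$ is locally isometric to $\s^1\times\r^2$ or $\t^2\times\r$.

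\medskip
\noindent\textbf{Main obstacle.} The delicate point is Step 3's globalization: showing the bifurcation leaves extend to a genuine \emph{complete} foliation of $\amb$ rather than collapsing or developing focal points at the boundary $|t|=\delta$. The uniform-$\delta$ hypothesis is exactly what prevents the domain of the foliation from shrinking, but one still has to verify that the union of leaves is open (clear, via the local product structure) \emph{and} closed (limits of leaves are leaves — this needs a compactness/curvature-estimate argument for stable minimal surfaces, using that everything in sight is flat and totally geodesic so the estimates are trivial in the limit). I expect this to be where the real care is needed; the rest is the standard equality-case analysis of the stability inequality plus Jacobi-field/Fermi-coordinate bookkeeping.
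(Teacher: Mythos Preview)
Your Step~2 has a genuine gap. From the data you have assembled along $\Sigma_0$ alone --- $A_0\equiv 0$, $K_0\equiv 0$, ${\rm Ric}(N_0,N_0)\equiv 0$, equivalently $L_0=\Delta_0$ --- it does \emph{not} follow that the normal exponential map $F(p,t)=\exp_p(tN(p))$ pulls back the ambient metric to the product $g_\Sigma+dt^2$. In Fermi coordinates $F^*g=dt^2+g_t$ with $\partial_t g_t$ given (up to a factor) by the second fundamental form $h_t$ of the equidistant level; you would need $h_t\equiv 0$ for \emph{every} $|t|<\delta$, while the information you have is purely first-order at $t=0$. Crucially, Step~2 as written invokes neither the bifurcation hypothesis nor the nonnegativity of scalar curvature off $\Sigma_0$, and without these there is no reason the equidistant surfaces should even be minimal, let alone totally geodesic. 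Note too that the bifurcation surfaces $\Sigma_t=\{\exp_p(u(p,t)N(p))\}$ are a priori \emph{not} the equidistant surfaces, since $u(p,t)$ may depend on $p$.

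The paper closes this gap via the lapse function $\rho_t=g(N_t,\partial_t\psi_t)$. From the sign condition on $u$ in Definition~\ref{def1} one has $\rho_t\geq 0$; since $\rho_t$ satisfies the Jacobi equation on $\Sigma_t$ and $\rho_0\equiv 1$, the maximum principle plus continuity gives $\rho_t>0$ for small $|t|$, so each $\Sigma_t$ is \emph{stable} by the Fischer--Colbrie criterion. The flatness/total-geodesy/$S=0$ conclusion then applies to every $\Sigma_t$, so $L_t=\Delta_t$ and $\rho_t$ is a positive harmonic function on the \emph{parabolic} surface $\Sigma_t$, hence constant. Constant lapse together with $A_t\equiv 0$ is precisely what makes $N_t$ parallel and identifies (after reparametrising $t$) the bifurcation foliation with the equidistant foliation, yielding the product metric on $\mathcal U$. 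You do recognise the stability of the leaves in Step~3, but it is already needed here, and the parabolicity argument forcing $\rho_t$ to be constant is the missing link in your sketch.
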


We should point out the condition that $\Sigma $ bifurcates is necessary. In fact, one can construct the following example: Let $\c (-l , l)$ be the right cylinder of height $2l$ and radius $1$ endowed with the flat metric. Close it up with two spherical caps $S_i $, $i=1,2$ (one on the top and another on the bottom). Now, smooth the surface $\amb ^2 = \c (-l,l) \cup S_1 \cup S_2$ so that it is flat on $\c(-l + \eps , l - \eps)$, for some $\eps >0$, and has nonnegative Gaussian curvature. 

Consider the three-manifold $\amb ^3 = \amb ^2 \times \r $. One can see that, if we take a closed geodesic $\gamma (t) \subset \c (-l+\eps , l - \eps)$ , $t \in (-l+\eps , l - \eps)$, the surface $\Sigma (t) := \gamma (t) \times \r $ is a complete stable minimal cylinder in $ \amb $ that bifurcates, but, when we reach $t= l -\eps$, this property it might disappear (it could bifurcate as constant mean curvature surfaces at one side, but not minimal).  

One interesting consequence of Theorem \ref{t2} is the following:
\begin{corollary}\label{c1}
Let $\amb $ be a complete oriented Riemannian three-manifold with nonnegative scalar curvature. Assume there exists $\Sigma \subset \amb $ a complete stable minimal surface conformally equivalent to a cylinder that bifurcates. Then, 
$$ {\rm Vol}(\amb) = + \infty . $$
\end{corollary}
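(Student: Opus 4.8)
The plan is to deduce Corollary \ref{c1} directly from the structural conclusion of Theorem \ref{t2}. By that theorem, $\Sigma$ being a complete stable minimal cylinder in $\amb$ that bifurcates forces the existence of an open set $\uu \subset \amb$ which is locally isometric to $\c \times (-\delta,\delta)$, where $\c = \s^1 \times \r$ is the standard (flat, complete, noncompact) cylinder. Since $\Sigma$ itself is complete and noncompact, and for each $t$ the bifurcated surface $\Sigma_t$ is a complete minimal surface sitting inside $\uu$, the set $\uu$ contains a family of complete noncompact surfaces, hence $\uu$ is itself noncompact. The heart of the argument is therefore to show that a three-manifold containing such a $\uu$ must have infinite volume.

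First I would make precise the local isometry: there is a normal exponential collar $\Psi : \Sigma \times (-\delta,\delta) \to \uu$, $\Psi(p,t) = \exp_p(u(p,t)N(p))$, which by Theorem \ref{t2} is a local isometry onto $\uu$ when $\Sigma \times (-\delta,\delta)$ carries the product metric of the flat cylinder metric on $\Sigma$ and $dt^2$ (after reparametrizing $t$ by arclength along the normal geodesics, which is legitimate since $\pt|_{t=0}u = 1$ and $\Sigma_t$ are minimal, so the flow lines are unit-speed geodesics meeting each leaf orthogonally). Now fix a divergent sequence of arcs: since $\Sigma$ is conformally a cylinder and, by the theorem, flat and totally geodesic, it is isometric to $\s^1(r)\times\r$ for some $r>0$; take the circles $\gamma_k = \s^1(r)\times\{k\}$, $k \in \n$. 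These are pairwise disjoint closed geodesics in $\Sigma$, and the product pieces $\gamma_k \times (-\delta/2,\delta/2)$ are pairwise disjoint flat cylinders of area bounded below by a fixed constant $c = 2\pi r \cdot \delta$.

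Next I would push this lower bound up to $\amb$. Because $\Psi$ is a local isometry, volume is non-decreasing under $\Psi$ in the sense that $\mathrm{Vol}_\amb(\Psi(A)) \le \mathrm{Vol}(A)$ is the wrong direction — instead one argues the other way: $\Psi$ restricted to a small enough neighborhood of any point is an isometric embedding, so each $\gamma_k \times (-\delta/4,\delta/4)$, being covered by finitely many such neighborhoods, has image of volume comparable to $c$; more cleanly, one covers $\gamma_k \times (-\delta/4,\delta/4)$ by finitely many open sets on which $\Psi$ is an isometric embedding and adds up, or simply notes that $\Psi$ is a covering map onto its image when restricted appropriately and volumes pull back. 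The essential point is that the $\Psi(\gamma_k \times (-\delta/4,\delta/4))$ need not be disjoint in $\amb$, and this is the main obstacle: a local isometry can wrap the collar around a compact region infinitely often. To rule this out, I would use that $\Sigma$ (hence the collar in the $\r$-direction) is properly embedded or at least properly immersed. If $\Sigma$ is properly immersed in $\amb$, then the sequence $\gamma_k$ diverges in $\amb$, so for any compact $K \subset \amb$ only finitely many $\gamma_k$ meet $K$; partition $\n$ into blocks whose images have pairwise bounded overlap and conclude $\mathrm{Vol}(\amb) \ge \sum_k c' = \infty$ for a uniform $c' > 0$.

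The remaining gap is precisely whether $\Sigma$ may be assumed proper. Here I would invoke the second assertion of Theorem \ref{t2} in the uniform case, or argue directly: a complete stable minimal surface that bifurcates carries, through the foliation $\{\Sigma_t\}$, a local product structure on a neighborhood in $\amb$, and a leaf of a foliation by minimal surfaces in a manifold that is locally a Riemannian product $\c\times(-\delta,\delta)$ is automatically properly embedded in that product neighborhood; since the whole of $\amb$ near $\uu$ has this structure, $\Sigma$ is proper in $\uu$, and $\uu$ being a union of such complete leaves is an open subset on which the $\r$-factor of $\Sigma$ escapes every compact set. If one prefers to avoid even this, one can simply observe that $\mathrm{Vol}(\amb) \ge \mathrm{Vol}(\uu)$ and that $\uu$, carrying a complete flat product metric $\c\times(-\delta,\delta)$ pulled back isometrically on arbitrarily large pieces, cannot have finite volume — any finite-volume quotient of $\s^1\times\r\times(-\delta,\delta)$ would have to be compact in the $\r$-direction, contradicting completeness of the leaves $\Sigma_t$. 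Assembling these observations yields $\mathrm{Vol}(\amb) = +\infty$, which is the claim.
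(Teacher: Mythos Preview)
The paper gives no separate proof of Corollary~\ref{c1}; it is meant to be immediate from Theorem~\ref{t2}. The point you are missing is that the \emph{proof} of Theorem~\ref{t2} (not just its statement) shows that the normal exponential map $\Phi:\Sigma\times(-\delta,\delta)\to\mathcal U$ is both a local isometry and a \emph{diffeomorphism} onto $\mathcal U$, hence a global isometry. Since $\Sigma$ is isometric to the flat cylinder $\c=\s^1\times\r$, the open set $\mathcal U$ is isometric to $\s^1\times\r\times(-\delta,\delta)$, which plainly has infinite volume; therefore ${\rm Vol}(\amb)\ge{\rm Vol}(\mathcal U)=+\infty$.

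Your write-up has the right instinct---use the product structure on $\mathcal U$---but you work only from the phrase ``locally isometric'' in the theorem statement, and then wrestle with the possibility that $\Psi$ wraps the collar infinitely often onto a finite-volume image. That concern is legitimate for a bare local isometry (e.g.\ $\r\to\s^1$), and your attempts to rule it out are not complete: you never establish that $\Sigma$ is properly immersed, the claim that a leaf of a minimal foliation is automatically proper in the product neighborhood is circular here, and the final ``finite-volume quotient would be compact in the $\r$-direction'' step is not justified (a local isometry onto $\mathcal U$ need not exhibit $\mathcal U$ as a Riemannian quotient, nor does completeness of $\Sigma_t$ by itself prevent it from sitting inside a finite-volume set). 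The fix is simply to invoke the diffeomorphism assertion from the proof of Theorem~\ref{t2}, after which the corollary is one line.
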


Actually, the above conclusion (that is, the above Corollary \ref{c1}) is also valid when the cylinder bifurcates only at one side.

%{\bf Buscar: Positive radial solutions and non-radial bifurcation for semilinear elliptic equations in annular domains}

\section{Preliminaries}

We denote by $\amb$ a complete connected orientable Riemannian three-manifold, with Riemannian metric $g$. Moreover, throughout this work, we will assume that its scalar curvature is nonnegative, i.e., $S\geq 0$. $\Sigma \subset \amb$ will be assumed to be connected  and oriented. 

We denote by $N$ the unit normal vector field along $\Sigma$. Let $p_0 \in \Sigma$ be a point of the surface and $D(p_0,s)$, for $s>0$, denote the geodesic disk centered at $p_0$ of radius $s$. We assume that $\overline{D(p_0 ,s)} \cap \partial \Sigma = \emptyset$. Moreover, let $r$ be the radial distance of a point $p$ in $D(p_0, s)$ to $p_0$. We write $D(s)=D(p_0 ,s)$.

We also denote
\begin{eqnarray*}
l(s) &=& {\rm Length}(\partial D(s)) \\
a(s) &=& {\rm Area}(D(s))\\
K(s) &=& \int _{D(s)} K \\
\chi (s)&=& \text{Euler characteristic of } D(s).
\end{eqnarray*}

%Moreover, we will need the following result due to K. Shiohama and M. Tanaka (see \cite{ST}) which follows from the first variation formula for length and the Gauss-Bonnet formula,

%\begin{quote}
%{\bf Theorem C:} {\it The function $l$ is differentiable almost everywhere and we have
%\begin{enumerate}
%\item for almost all $r \in \r$,
%\begin{equation}\label{derlength}
%l'(r) \leq 2 \pi \chi (r)- K(r),
%\end{equation}
%\item for all $0 \leq a < b$,
%\begin{equation}\label{intlength}
%l(b)-l(a) \leq \int_ a ^b l' (r)
%\end{equation}
%\end{enumerate}}
%\end{quote}

%Here, $'$ denotes the derivative with respect to $r$.

Let $\Sigma \subset \amb$ be a stable minimal surface diffeomorphic to the cylinder, then, from Theorem B \cite{Re}, $\Sigma$ is flat and totally geodesic. We will give a (more general) proof of this result in the abstract setting of Schr\"{o}dinger-type operators:

\begin{lemma}\label{l1}
Let $\Sigma $ be a complete Riemannian surface. Let $L = \Delta + V - a K$ be a differential operator on $\Sigma$ acting on compactly supported $f \in H^{1,2}_0 (\Sigma)$, where $a >1/4$ is constant, $V \geq 0$, $\Delta$ and $K$ are the Laplacian and Gauss curvature associated to the metric $g$ respectively. 

Assume that $\Sigma$ is homeomorphic to the cylinder and $-L$ is non-negative. Then, $V\equiv 0$ and $K \equiv 0$, therefore, 
$$ {\rm Ker} L := \set{1} ,$$i.e., its kernel is the constant functions. Here, $L$ denotes the Jacobi operator.  
\end{lemma}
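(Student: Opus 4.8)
The goal is to show that a complete Riemannian surface $\Sigma$ homeomorphic to a cylinder, carrying a Schrödinger-type operator $L = \Delta + V - aK$ with $a > 1/4$, $V \geq 0$, and $-L \geq 0$, must be flat with $V \equiv 0$. My plan is to run a logarithmic cutoff / Cohn-Vossen argument on the stability (index) inequality. The hypothesis $-L \geq 0$ gives, for all compactly supported $f$,
\begin{equation*}
\int_\Sigma f^2 V + a \int_\Sigma f^2 K \;\leq\; \int_\Sigma \norm{\nabla f}^2 .
\end{equation*}
Since $\Sigma$ is a topological cylinder, by the Cohn-Vossen inequality and the Huber/Hartman structure theory, $\Sigma$ is parabolic or has finite total curvature; in either case $\int_{D(s)} K$ is controlled, and one can choose exhausting test functions $f_s$ supported on geodesic disks $D(s)$ whose Dirichlet energy $\int \norm{\nabla f_s}^2$ tends to $0$ (the standard logarithmic cutoff, which works precisely because the cylinder is the borderline conformal type).

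**The main estimate.**
First I would recall the differential-inequality machinery from the Preliminaries: using the coarea formula, the Gauss–Bonnet theorem $K(s) = 2\pi\chi(s) - \int_{\partial D(s)} k_g$, and the first variation of length, one gets a differential inequality for $l(s)$ and $a(s)$. Plugging the radial cutoff $f_s$ into the index inequality and letting $s \to \infty$, the key point is that the coefficient $a > 1/4$ forces the total curvature $\int_\Sigma K$ to be $\leq 0$ with a strict deficit unless $\Sigma$ is flat — this is exactly where $a > 1/4$ (rather than $a = 1/4$) is used, as it is in the Fischer-Colbrie–Schoen / Colding–Minicozzi type arguments. Concretely, one shows $\int_\Sigma K \le 0$ and then, combining with the Cohn-Vossen bound $\int_\Sigma K \le 2\pi\chi(\Sigma) = 0$ for the cylinder, that actually $\int_\Sigma K = 0$ and $\int_\Sigma f^2 V \to 0$ along the exhaustion, giving $V \equiv 0$. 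Then $L = \Delta - aK$ with $-L \geq 0$ and $\int_\Sigma K = 0$; a second application of the stability inequality with $V = 0$, testing against functions approximating the constant $1$, forces $K \equiv 0$ pointwise (otherwise a localized bump where $K > 0$ would violate stability, using again $a > 1/4$).

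**Identifying the kernel.**
Once $V \equiv 0$ and $K \equiv 0$, the operator reduces to $L = \Delta$ on a flat complete cylinder. Stability ($-\Delta \geq 0$) is automatic, and I would argue that $\Sigma$ with $\int_\Sigma K = 0$, finite total curvature, and the cylinder topology is conformally (indeed isometrically up to scaling of the two ends) the flat cylinder $\s^1 \times \r$, which is parabolic. On a parabolic surface the only bounded harmonic functions are constants, and more relevantly the kernel of $\Delta$ acting on $H^{1,2}_0$-limits (the Jacobi fields of finite energy, or the ground-state eigenfunctions) is spanned by the constant function $1$; this is the standard fact that a nonnegative Schrödinger operator with a positive (here constant) solution has one-dimensional kernel consisting of that solution. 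Hence $\operatorname{Ker} L = \{1\}$.

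**Main obstacle.**
The delicate step is the first one: rigorously extracting from the index inequality with the borderline cutoff that $\Sigma$ is flat, i.e., ruling out the "almost flat but not flat'' cylinders. The issue is that the logarithmic cutoff on a cylinder gives Dirichlet energy $\to 0$ only at a critical rate, so one must track the error terms in the Gauss–Bonnet / coarea estimates carefully and exploit the strict inequality $a > 1/4$ to absorb them — this is the same subtlety that forced the "finite total curvature'' hypothesis in Theorem A and was removed by Reiris. I expect to need the full strength of the Huber-type finite-total-curvature conclusion (which for a stable cylinder follows from the curvature estimates implicit in the inequality above) before the cutoff argument closes.
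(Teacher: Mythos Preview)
Your index inequality has a sign error: from $-L\ge 0$ with $L=\Delta+V-aK$ one gets
\[
\int_\Sigma V f^2 \;\le\; \int_\Sigma \norm{\nabla f}^2 \;+\; a\int_\Sigma K f^2,
\]
not what you wrote. More importantly, the ``main obstacle'' you flag is real and you have not resolved it. A logarithmic cutoff only gives $\int\norm{\nabla f_s}^2\to 0$ once you already know the area grows at most linearly (equivalently, once you know something like finite total curvature or parabolicity); invoking Huber here is circular. The paper avoids this by using the radial test function $f(r)=(1-r/s)^b$ together with the estimate from \cite[Lemma~3.1]{ER}, which produces
\[
\int_{D(s)}(1-r/s)^{2b}V \;\le\; 2a\pi\, G(s)\;+\;\frac{b\bigl(b(1-4a)+2a\bigr)}{s^2}\int_0^s (1-r/s)^{2b-2}l(r)\,dr,
\]
where $G(s)=-\int_0^s (f^2)'\chi(r)\,dr$. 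The condition $a>1/4$ is used exactly once: to choose $b\ge 1$ with $b(1-4a)+2a\le 0$, killing the length term outright. Then one only needs $\chi(s)\le 0$ for large $s$ (immediate for the cylinder) to get $G(s)\to 0$, hence $V\equiv 0$, with no a priori area or curvature control.

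Your Step~2 is where the genuine gap lies. From $V\equiv 0$ and the stability inequality you can at best extract $\int_\Sigma K\ge 0$ by letting $f\to 1$ (and even this needs parabolicity, not yet established), and Cohn--Vossen then gives $\int_\Sigma K=0$; but your ``localized bump'' argument for pointwise $K\equiv 0$ does not work as stated, since $K$ would take both signs and the cutoff errors compete with the bump. The paper proceeds entirely differently: once $V\equiv 0$, Fischer-Colbrie gives a smooth positive $u$ with $\Delta u=aKu$; the conformal metric $u^{2/a}g$ is complete with nonnegative Gauss curvature $\tilde K$ (by \cite{MPR}), and Cohn--Vossen on the cylinder forces $\tilde K\equiv 0$. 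Unwinding the conformal change yields $\abs{\nabla u}^2/u=\Delta u-aKu=0$, so $u$ is constant and hence $K\equiv 0$. Parabolicity then follows, and the kernel statement is a citation to \cite{MaPRo}. This conformal-change step is the key idea you are missing.
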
 
\begin{proof}
Set $b \geq 1$ and let us consider the radial function 
$$f(r):=\left\{ \begin{matrix}  (1-r/s)^b & r \leq s \\ 0 & r> s \end{matrix}\right. ,$$where $r$ denotes the radial distance from a point $p_0\in \Sigma$. Then, from \cite[Lemma 3.1]{ER} (see also \cite{MPR}), we have
\begin{equation*}
\int_ {D(s)} (1-r/s)^{2b} V \leq 2a \pi G(s)  + \frac{b(b(1-4a)+2a)}{s^2}\int _{0}^s(1-r/s)^{2b-2} l(r),
\end{equation*}where 
\begin{equation*}
G(s):= - \int _0 ^s (f(r)^2)' \chi (r) .
\end{equation*}

Therefore, since $a>1/4$, we can find $b \geq 1$ so that $b (1-4a)+2a \leq 0$. So
\begin{equation*}
\int_ {D(s)} (1-r/s)^{2b} V \leq 2a \pi G(s) .
\end{equation*}

\begin{itemize}
\item {\bf Step 1:} $V$ vanishes identically on $\Sigma$. 

Suppose there exists a point $p_0 \in \Sigma $ so that $V(p_0) >0$. From now on, we fix the point $p_0$. Then, there exists $\epsilon > 0$ so that $V(q)\geq \delta $ for all $q \in D(\epsilon)=D(p_0,\epsilon)$. Since $\Sigma$ is topollogically a cylinder, there exists $s_0 >0$ so that for all $s>s_0$ we have $\chi (s) \leq 0$ (see \cite[Lemma 1.4]{Ca}). 

Now, from the above considerations, there exists $\beta >0 $ so that 
$$  0 < \beta \leq  2 a \pi G(s) .$$

But, following \cite{ER}, we can see that 
\begin{equation*}
\begin{split}
G(s) & = - \int _0 ^s (f(r)^2)' \chi (r) = - \int _0 ^{s_0} (f(r)^2)' \chi (r)- \int
_{s_0} ^s (f(r)^2)' \chi (r)\\
 & \leq - \int _0 ^{s_0} (f(r)^2)'  =  -\left( f(s_0) ^2 - f(0)^2\right) = - f(s_0)^2 +1 \\
 &= -\left( 1-s_0/s\right)^{2b} +1 ,
\end{split}
\end{equation*}since $- \int _{s_0} ^s (f(r)^2)' \chi (r) \geq 0$. Therefore,
$$ G(s) \leq 1- (1-s_0/s)^{2b} \to 0 , \text{ as } s \to + \infty ,$$ which is a contradiction. Thus, $V$ vanishes identically along $\Sigma$.

\item {\bf Step 2:} $K$ vanishes identically on $\Sigma$. In particular, $\Sigma$ is parabolic.

First, note that $L:= \Delta - a K$. From \cite{FC}, there is a smooth positive function $u$ on $\Sigma$ such that $ Lu =0$. Set $\alpha := 1/a$. Then, from \cite{MPR} (following ideas of \cite{FC}), the conformal metric $\tilde{ds^2}:=u^{2\alpha} ds^2$, where $ds^2$ is the metric on $\Sigma$, is complete and its Gaussian curvature $\tilde K$ of is non-negative, i.e. $\tilde K \geq 0$. 

On the one hand, the respective Gaussian curvatures are related by
$$ \alpha  \Delta \ln u = K - \tilde K u ^{2\alpha}.  $$

On the other hand, since $\Sigma$ is topologically a cylinder, the Cohn-Vossen inequality says
$$ \int _\Sigma  \tilde K \leq  0 ,$$that is, $\tilde K $ vanishes identically. 	

Thus, $K = \alpha \Delta \ln u$. From this last equation, we get: 
$$ a K = \frac{1}{u} \Delta u - \frac{\abs{\nabla u}^2}{u^2}, $$that is,
$$ \frac{\abs{\nabla u}^2}{u} = \Delta u - a K u=0 . $$

This last equation implies that $u$ is constant, and since $u$ satisfies $Lu =0$, we have that $K $ vanishes identically on $\Sigma$. In particular, $\Sigma$ is parabolic (see \cite[Lemma 5]{KO})

\end{itemize}

This implies that the Jacobi operator becomes $L:= \Delta$, and so the constant functions are in the kernel. But, since $\Sigma $ is parabolic, such a kernel has dimension one (see \cite{MaPRo}), therefore
$${\rm Ker} L := \set{1} . $$
\end{proof}

Set $\c := \s ^1 \times \r $ the flat cylinder, then we can parametrize $\Sigma$ as the isometric immersion $\psi _0: \c \to \amb $ where $\Sigma := \psi _0 (\c)$. Also, set $N_0 : \c \to N\Sigma$ the unit normal vector field along $\Sigma$. 

Assume $\Sigma$ bifurcates (see Definition \ref{def1}), then there exist $\delta >0 $ and a smooth map $u : \c  \times (-\delta , \delta) \to \r$ so that the surface $\Sigma _t := \psi _t ( \c)$, $\psi _t : \c \to \amb$ where
$$ \psi _t  (p):= {\rm exp}_{\psi_0(p)} (u(p,t)N_0(p)) \, , \, \, p \in \c ,$$is a complete minimal surface. 

For each $t \in (-\delta , \delta)$, the lapse function $\rho _t : \Sigma \to \r$ is defined by
$$ \rho _t (p) = g\left( N_t (p), \pt \psi _t (p)\right).$$

Clearly, $\rho _0 (p) =1$ for all $p \in \c$. Also, the lapse function satisfies the Jacobi equation

\begin{equation}\label{rhoJacobi}
\Delta _t \rho _t + ({\rm Ric}(N_t)+|A_t|^2)\rho _t = 0,
\end{equation}since $\psi _t (\c)$ is minimal for all $|t|<\delta$.

\begin{lemma}\label{l3}
There exists $0< \delta ' <\delta $ such that $\Sigma _t$ is a stable minimal surface for each $t \in (-\delta, \delta)$. Thus, $\Sigma_t$ is flat, totally geodesic and $S $ vanishes along $\Sigma _t$ for each $t \in (-\delta, \delta)$.
\end{lemma}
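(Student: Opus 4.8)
The plan is to use continuity/openness of stability together with the previous Lemma \ref{l1}. First I would observe that at $t=0$ the surface $\Sigma_0 = \Sigma$ is stable by hypothesis, and by Lemma \ref{l1} applied to its Jacobi operator (which has the required form with $a=1$ and $V = \tfrac12|A|^2 + S \geq 0$), $\Sigma$ is flat, totally geodesic and $S$ vanishes along it; moreover $\mathrm{Ker}\,L_0 = \set{1}$, so $0$ is the bottom eigenvalue of $-L_0$ and it is \emph{simple}. The key point is that the lapse function $\rho_0 \equiv 1$ is exactly this bottom eigenfunction, and it is strictly positive. Strict positivity of the bottom eigenfunction is the standard criterion (à la Fischer-Colbrie--Schoen) for stability, so I want to propagate positivity of $\rho_t$ to small $t$.

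The main steps, in order: (1) From the bifurcation hypothesis, $u(p,t)\geq 0$ for $t>0$ and $\leq 0$ for $t<0$ with $\partial_t|_{t=0}u = 1$; this forces the lapse $\rho_t$ to satisfy $\rho_0\equiv 1>0$, and by smoothness of $(p,t)\mapsto \rho_t(p)$ together with the sign condition on $u$, $\rho_t$ cannot change sign near a point for $|t|$ small --- more carefully, I would argue that since $\rho_t$ solves the Jacobi equation \eqref{rhoJacobi} on the minimal surface $\Sigma_t$ and $\rho_0\equiv 1$, there is $\delta'>0$ with $\rho_t>0$ on all of $\Sigma_t$ for $|t|<\delta'$ (using that the family is smooth and, if needed, a Harnack-type argument to rule out a zero escaping to infinity). (2) A complete minimal surface admitting a positive solution of its Jacobi equation is stable --- this is precisely the Fischer-Colbrie--Schoen criterion (\cite{FCS}, \cite{FC}) --- hence $\Sigma_t$ is stable for $|t|<\delta'$. (3) Now apply Lemma \ref{l1} again to each $\Sigma_t$ (still a cylinder, still in the ambient $\amb$ with $S\geq 0$, so the Jacobi operator again has the form $\Delta + V - K$ with $V=\tfrac12|A_t|^2+S\geq 0$): conclude $|A_t|^2\equiv 0$, $K_t\equiv 0$ and $S\equiv 0$ along $\Sigma_t$, i.e. $\Sigma_t$ is flat, totally geodesic with $S$ vanishing. (4) Finally, re-examine the interval: once $|A_t|\equiv 0$ and $S\equiv 0$ for $|t|<\delta'$, the Jacobi equation \eqref{rhoJacobi} degenerates to $\Delta_t\rho_t=0$ on a parabolic (flat cylinder) surface, so $\rho_t$ is constant, hence equal to its value somewhere; by continuity this constant stays positive, and one can then bootstrap $\delta'$ up to the full $\delta$ --- the set of $t$ for which $\Sigma_t$ is stable is open (by positivity of lapse) and closed (the limiting surface still carries a nonnegative, in fact constant, lapse), hence all of $(-\delta,\delta)$.

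The main obstacle I expect is Step (1): showing the lapse $\rho_t$ stays positive on the \emph{whole} noncompact surface $\Sigma_t$, not merely on compact pieces, for a uniform $\delta'>0$. Near any fixed compact set this is immediate from smoothness and $\rho_0\equiv1$; the danger is a zero of $\rho_t$ escaping to infinity along the cylinder as $t\to 0$. I would handle this by exploiting the sign structure in Definition \ref{def1}: for $t>0$ the surface $\Sigma_t$ lies "above" $\Sigma$ in the normal direction (since $u(p,t)\geq 0$), and $\Sigma_t$ is itself minimal; combined with the fact that consecutive members of the family are disjoint ordered minimal surfaces, a maximum-principle / sweep-out argument shows $\rho_t = \langle N_t,\partial_t\psi_t\rangle$ cannot vanish --- a zero would produce a tangency of the ordered family contradicting the strong maximum principle for the minimal surface equation. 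Once positivity of the lapse is secured, Steps (2)--(4) are essentially bookkeeping built on Lemma \ref{l1} and the Fischer-Colbrie--Schoen criterion.
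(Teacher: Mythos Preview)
Your plan is correct and follows the same route as the paper: obtain a positive lapse $\rho_t$, invoke the Fischer--Colbrie criterion \cite{FC} to conclude stability of $\Sigma_t$, and then apply Lemma \ref{l1} to get flatness, total geodesy and $S\equiv 0$.

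The one place where the paper is cleaner is your ``main obstacle'' in Step (1). You worry about a zero of $\rho_t$ escaping to infinity and propose a sweep-out/tangency argument via the maximum principle for the nonlinear minimal-surface equation. The paper bypasses this entirely: the sign hypothesis on $u$ in Definition \ref{def1} gives $\rho_t \geq 0$ directly, and since $\rho_t$ solves the \emph{linear} elliptic Jacobi equation \eqref{rhoJacobi}, the strong maximum principle yields the dichotomy $\rho_t \equiv 0$ or $\rho_t > 0$ on all of $\Sigma_t$. The pointwise convergence $\rho_t \to \rho_0 \equiv 1$ as $t\to 0$ then rules out $\rho_t\equiv 0$ for $|t|$ small, giving a uniform $\delta'$ with $\rho_t>0$ on the whole noncompact surface --- no Harnack or compactness needed. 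Your Step (4) bootstrap to the full interval $(-\delta,\delta)$ is not carried out in the paper's proof, which stops at $(-\delta',\delta')$; the appearance of $(-\delta,\delta)$ in the lemma statement is a minor inconsistency there rather than something you need to prove.
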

\begin{proof}
First, note that, the lapse function is not negative for all $|t| < \delta$ and therefore, by \eqref{rhoJacobi} and the Maximum Principle, either $\rho _t$ vanishes identically or $\rho _t >0$ for each $|t| < \delta$.

So, since 
$$ \rho _t \to \rho _0 \equiv 1  , \, {\rm as } \, t \to 0,$$thus, we can find a uniform constant $0 < \delta ' < \delta $ such that  $\rho _t > 0$ for all $|t| \leq  \delta '$.

Therefore, $\rho _t$, $|t|\leq \delta '$, is a positive function solving the Jacobi equation. This implies that $\Sigma _t$ is stable for all $|t|\leq \delta '$ (see \cite{FC}). 

The last assertion follows from Lemma \ref{l1} and $\Sigma _t$ be stable.

\end{proof}

\section{Proof of Theorem \ref{t2}}

%Now, we are ready to prove: 
%
%\begin{quote}
%{\bf Theorem \ref{t2}:} {\it Let $\amb $ be a complete oriented Riemannian three-manifold with nonnegative scalar curvature. Assume there exists $\Sigma \subset \amb $ a complete stable minimal surface conformally equivalent to a cylinder that bifurcates. Then, $\Sigma$ is flat, totally geodesic and $S$ vanishes along $\Sigma$. Moreover, there exists an open set $\mathcal U \subset \amb$  so that $\mathcal U$  is locally isometric to $\c \times (-\delta , \delta)$, where $\c $ denotes the standard cylinder $\s^1 \times \r$. Also, if any complete stable cylinder in $\amb$ bifurcate for an uniform $\delta >0$, then $\amb$ is locally isometric either to $\s^1 \times \r ^2$ or $\t^2 \times \r$ (here $\t^2$ is the flat tori).}
%\end{quote}
%\begin{proof}

From Definition \ref{def1} and Lemma \ref{l3}, there exists $\delta >0$ so that $\Sigma _t$ is a complete minimal stable surface, which is flat, totally geodesic and $S=0$ along $\Sigma _t$, for each $|t|<\delta$. 

Now, we follows ideas of \cite{BBN}. Since ${\rm Ric}(N_t)+ |A_t|^2 \equiv 0$ and $H(t)=0$ for each $|t|<\delta$, from \eqref{rhoJacobi} and $\Sigma _t$ being parabolic, we obtain that $\rho _t$ is constant. Thus, since $\Sigma _t$ is totally geodesic, 

$$\begin{matrix}
Y : & \c \times (-\delta , \delta)&  \to& \amb \\
 & (p,t) & \to & Y(p,t):=N_t(p)
 \end{matrix}$$ is parallel. Also, the flow of $N_t$ is a unit speed geodesic flow (see \cite{M}). Moreover, the map

$$\begin{matrix}
\Phi : & \Sigma \times (-\delta , \delta )&  \to& \amb \\
 & (p,t) & \to & \Phi(p,t):={\rm exp}_{\psi _0(p)} (t \, N(p))
 \end{matrix}$$is a local isometry onto $\mathcal U = \bigcup _{|t|<\delta} \Sigma _t$. Therefore, $\Phi$ is a diffeomorphism onto $\mathcal U$, which implies that $Y : \c \times (-\delta , \delta) \to \mathcal U $ is a globally defined unit Killing vector field. This implies that $\mathcal U$ is locally isometric to $\c \times (-\delta , \delta)$.

Now, assume that any stable minimal complete cylinder bifurcates for an uniform $\delta >0$. Then, we can start with a complete stable minimal cylinder $\Sigma _0$ that bifurcates, and then by the above considerations, $\Sigma _t$, for each $|t|<\delta$, is complete, flat, totally geodesic and $S$ vanishes along $\Sigma _t$. Moreover, $\Sigma _t$ is strongly stable for each $|t|< \delta $. Note that $\Sigma _ \delta$ is a strongly stable minimal surfaces conformally equivalent to a cylinder, since it is limit of strongly stable minimal surfaces $\Sigma _t$ which are flat and totally geodesic, then $\Sigma _\delta $ is totally geodesic, flat and $S =0 $ along $\Sigma _\delta$. Therefore, by Definition \ref{def1} and Lemma \ref{l3}, there exists $\delta >0 $ so that $\Sigma _t$, $- \delta < t < 2 \delta$, is flat, totally geodesic and $S $ vanishes along $\Sigma _t$. Continuing this argument, $\Sigma _t$ is flat, totally geodesic and $S$ vanishes along $\Sigma _t$ for each $t \in \ii $, where $\ii = \r$ or $\ii= \s ^1$.

As we did above, since ${\rm Ric}(N_t)+ |A_t|^2 \equiv 0$ and $H(t)=0$ for each $t \in \ii$, from \eqref{rhoJacobi} and $\Sigma _t$ being parabolic, we obtain that $\rho _t$ is constant. Thus, since $\Sigma _t$ is totally geodesic, 

$$\begin{matrix}
Y : & \c \times \ii&  \to& \amb \\
 & (p,t) & \to & Y(p,t):=N_t(p)
 \end{matrix}$$ is parallel, where $\ii=\r$ or $\ii = \s ^1$. Also, the flow of $N_t$ is a unit speed geodesic flow (see \cite{M}). Moreover, the map

$$\begin{matrix}
\Phi : & \Sigma \times \ii&  \to& \amb \\
 & (p,t) & \to & \Phi(p,t):={\rm exp}_{\psi _0(p)} (t \, N(p))
 \end{matrix}$$is a local isometry, which implies that it is a covering map. Therefore, 
$\Phi$ is a diffeomorphism, which implies that $Y : \c \times \ii \to \amb $ is a globally defined unit Killing vector field. This implies that $\amb$ is locally isometric either to $\s^1 \times \r ^2$ or $\t ^2 \times \r$ (here $\t ^2$ denotes the flat tori). 
%\end{proof}

\begin{center}
{\bf Acknowledge}
\end{center}

The author is partially supported by Spanish MEC-FEDER Grant MTM2010-19821, and Regional J. Andalucia Grants P06-FQM-01642 and FQM325.

\end{document}